\long\def\symbolfootnote[#1]#2{\begingroup%
\def\thefootnote{\fnsymbol{footnote}}\footnote[#1]{#2}\endgroup}
\newtheoremstyle{remark}
  {}{}{}{}{\bfseries}{.}{.5em}{{\thmname{#1 }}{\thmnumber{#2}}{\thmnote{ (#3)}}}
\newtheorem{tw}{Theorem}[section]
\newtheorem{lem}{Lemma}[section]
\def\vint{\mathop{\mathchoice%
          {\setbox0\hbox{$\displaystyle\intop$}\kern 0.22\wd0%
           \vcenter{\hrule width 0.6\wd0}\kern -0.82\wd0}%
          {\setbox0\hbox{$\textstyle\intop$}\kern 0.2\wd0%
           \vcenter{\hrule width 0.6\wd0}\kern -0.8\wd0}%
          {\setbox0\hbox{$\scriptstyle\intop$}\kern 0.2\wd0%
           \vcenter{\hrule width 0.6\wd0}\kern -0.8\wd0}%
          {\setbox0\hbox{$\scriptscriptstyle\intop$}\kern 0.2\wd0%
           \vcenter{\hrule width 0.6\wd0}\kern -0.8\wd0}}%
          \mathopen{}\int}
\def\={\hspace{-3mm}&=&\hspace{-3mm}}
\renewcommand{\r}{\mathbb{R}}
\newcommand{\n}{\mathbb{N}}
\begin{document}

\date{}

\title{\bf Maximal operator in  H\"older spaces}

\author{Piotr Micha{\l} Bies, Micha{\l} Gaczkowski, Przemys{\l}aw G\'{o}rka \bigskip}

\begin{abstract}
We study the maximal operator on the variable exponent H\"older spaces in the setting of metric measure spaces. The boundedness is proven for 
metric measure spaces satisfying an annular decay property. Let us stress that there are no assumptions on the regularity of the variable 
exponent and the variable exponent can touch values $0$ and $1$. Furthermore, the continuity of the maximal operator between H\"older spaces is investigated. Those results are new even in the Euclidean setting.
\end{abstract}
\maketitle
\bigskip

\noindent
{\bf Keywords}: metric measure spaces, annular decay property, maximal operator, variable exponent H\"older spaces
\medskip

\noindent
\emph{Mathematics Subject Classification (2010):} 42B25; 30L99

\medskip
\section{Introduction}
The Hardy-Littlewood maximal operator $M$ plays a very important role in the theory of function spaces. The boundedness of $M$ in various types of function spaces is a central issue. It is well known 
that for $1<p \leq \infty$, the maximal operator is bounded on $L^{p}(X, d, \mu)$, where $(X, d, \mu)$ is a doubling metric measure space (see e.g. \cite{Heinonen}). On the other hand, the maximal operator has been also studied in different function spaces, e.g.: Banach function spaces \cite{Sh}, Sobolev spaces \cite{K}, Lebesgue spaces with variable exponent \cite{Diening}, generalized Orlicz spaces \cite{Hasto}. Furthermore, Buckley proved \cite{Bu} that $M$ is bounded in H\"older spaces $C^{0,s}(X)$, where $(X, d, \mu)$ satisfies the $\delta$-annular decay property and $\mu$ is doubling. More recently, G\'{o}rka \cite{Gorka} proved that the maximal operator is bounded in the space of continuous functions $C(X)$, if $(X, d, \mu)$ satisfies the $\delta$-annular property. On the other hand, if no annular decay property is assumed, then $Mf$ can fail to be continuous, even if $f \in C^{0,1}(X)$ (see Example 1.4. in \cite{Bu}). 

The main objective of the paper is to study the maximal operator in the variable exponent H\"older spaces $C^{0,\alpha(\cdot)}(X)$, where $(X, d, \mu)$ satisfies the $\delta$-annular property.\footnote{Variable exponent spaces \cite{diening2011lebesgue} are nowadays used in the description of  non-linear phenomena in fluid mechanics \cite{ruzicka2007electrorheological}, and in image restoration \cite{li2010variable,https://doi.org/10.1049/ipr2.12010}, among other fields.} We shall prove boundedness of maximal operator in $C^{0,\alpha(\cdot)}(X)$. Let us stress that in our result there are no assumptions on the regularity of the variable exponent and the variable exponent can touch values $0$ and $1$. In particular, we do not assume log-H\"{o}lder continuity of the exponent, which in the theory of variable exponent spaces is a commonly used assumption on the exponent.  Moreover, the second main objective is the investigation of the continuity of the maximal operator between $C^{0,\alpha(\cdot)}(X)$ and $C^{0,\beta(\cdot)}(X)$. We prove that $M$ is continuous if $\sup_{x \in X} \beta (x) / \alpha (x) <1$. On the other hand, we show that $M$ is discontinuous on $C^{0,1}(\mathbb{R})$.  This result is rather surprising if we take a look at Luiro's paper \cite{Luiro} about continuity of the maximal operator in Sobolev spaces.
Let us make it clear that those results are new even in the Euclidean setting.

The remainder of the paper is structured as follows. In Section 2, we introduce the notations and recall the definitions. Our first principal assertion, concerning the boundedness of the maximal operator in the variable exponent H\"older spaces, is formulated and proven in Section 3. The last section is devoted to study the continuity of the maximal operator between H\"older spaces.

\section{Preliminaries}
Let $(X, d, \mu)$ be a metric measure space equipped with a metric $d$ and the Borel measure $\mu$. 
We assume that the measure of every open nonempty set is positive and that the measure of every bounded set is finite. 
We shall denote the average of an integrable function $f$ over the measurable set $A$, such that $0<\mu(A)<\infty$, in the following manner
\begin{eqnarray*}
  \vint_{A} f d \mu =\frac{1}{\mu(A)}\int_A f \, d \mu.
\end{eqnarray*}
Let $f :X \rightarrow \mathbb{R}$ be a locally integrable function, then the \texttt{maximal function} $Mf$ is defined as follows
\[
Mf(x) = \sup_{r>0} \vint_{B(x,r)} |f| \, d \mu.
\]
Next, we recall the definition of annular decay property \cite{Bu}. Given $\delta \in (0,1]$, we say that the space $(X, d, \mu)$ satisfies the $\delta$-\texttt{annular decay property} if there 
exists a constant $K\geq 1$ such that for all $x\in X$, $r>0$, $0<\epsilon <1$, we have
\[
  \mu \left(B(x,r) \setminus B(x,r(1-\epsilon))\right)\leq K \epsilon^{\delta}\mu(B(x,r)).
\]

Let $(X,d)$ be a metric space, by $C(X)$ we denote the space of continuous functions on $X$ such that the norm
\[
 \|f\|_{C(X)}= \sup_{x\in X}|f(x)|
\]
is finite. Moreover, for $\alpha :  X \rightarrow [0,1]$ we denote by $C^{0,\alpha(\cdot)}(X)$ the \texttt{variable exponent H\"{o}lder space}, i.e. the space of $f \in C(X)$ such that 
\[
\left\| f  \right\|_{C^{0,\alpha(\cdot)}(X)} := \| f \|_{C(X)}  + \sup_{x\neq y} \frac{|f(x)-f(y)|}{d^{\alpha(x)}(x,y)} < \infty.
\]

\section{Boudedness of the maximal operator}

\begin{tw}\label{t1}
 Suppose that $0<\delta\leq 1$, and that $(X, d, \mu)$ satisfies the $\delta$-annular property. If $\alpha : X \rightarrow [0,\delta]$, then $M: C^{0,\alpha(\cdot)}(X) \rightarrow {C}^{0,\alpha(\cdot)}(X)$ and there exists $C_1>0$
such, that for $f \in C^{0,\alpha(\cdot)}(X)$ the following estimate holds
\[  
\| Mf \|_{{C}^{0,\alpha(\cdot)}(X)} \leq C_1   \| f \|_{C^{0,\alpha(\cdot)}(X)}.
\]
\end{tw}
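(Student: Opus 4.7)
The plan is to bound the $C(X)$ norm and the variable Hölder seminorm of $Mf$ separately. The sup norm estimate $\|Mf\|_{C(X)}\le\|f\|_{C(X)}$ is immediate from the definition of the maximal function as a supremum of averages of $|f|$, and continuity of $Mf$ follows from the result of G\'orka mentioned in the introduction, which requires only $f\in C(X)$ and the $\delta$-annular decay property. It therefore suffices to control the Hölder seminorm.

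Fix $x\ne y$, write $\rho=d(x,y)$, denote the Hölder seminorm of $f$ by $[f]$, and fix $\varepsilon>0$. The goal is $|Mf(x)-Mf(y)|\le C[f]\rho^{\alpha(x)}$ with $C$ depending only on $K$ and $\delta$. Two cases arise depending on which of $Mf(x)$ or $Mf(y)$ is larger, and each splits further by the radius of an approximating ball.

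Suppose first $Mf(x)\ge Mf(y)$ and choose $s_0>0$ with $\vint_{B(x,s_0)}|f|\,d\mu\ge Mf(x)-\varepsilon$. If $s_0\le 2\rho$, pointwise Hölder at the base point $x$ gives $\vint_{B(x,s_0)}|f|\,d\mu\le|f(x)|+[f](2\rho)^{\alpha(x)}$, and combining with $Mf(y)\ge|f(y)|\ge|f(x)|-[f]\rho^{\alpha(x)}$ (using the seminorm on the ordered pair $(x,y)$) yields the desired bound. If $s_0>2\rho$, set $A=B(x,s_0)$ and $B=B(y,s_0+\rho)$, so $A\subset B$ and $Mf(y)\ge\vint_B|f|\,d\mu$. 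The identity
\[
\vint_A|f|\,d\mu-\vint_B|f|\,d\mu=\frac{\mu(B\setminus A)}{\mu(B)}\Bigl(\vint_A|f|\,d\mu-\vint_{B\setminus A}|f|\,d\mu\Bigr)
\]
reduces the task to two factors. From $B\setminus A\subset B(y,s_0+\rho)\setminus B(y,s_0-\rho)$ and the $\delta$-annular decay property the prefactor is at most a constant multiple of $(\rho/s_0)^\delta$; the bracket is at most $4[f]s_0^{\alpha(x)}$ by Hölder, since every point of $B$ lies within distance $s_0+2\rho\le 2s_0$ of $x$. The product is a constant times $[f]\rho^\delta s_0^{\alpha(x)-\delta}$, and the hypothesis $\alpha(x)\le\delta$ together with $s_0\ge 2\rho$ lets me dominate it by $C[f]\rho^{\alpha(x)}$.

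The second case $Mf(y)>Mf(x)$ is handled in parallel: approximate $Mf(y)$ by $\vint_{B(y,s_0)}|f|\,d\mu$, and in the large-radius subcase $s_0>2\rho$ take $A=B(x,s_0-\rho)\subset B(y,s_0)=B'$, so $Mf(x)\ge\vint_A|f|\,d\mu$. The inclusion $B(y,s_0-2\rho)\subset A$ together with annular decay applied at $y$ gives $\mu(B'\setminus A)/\mu(B')\le K(2\rho/s_0)^\delta$, and the computation concludes as in the previous case. The main obstacle is the asymmetry of the seminorm, which uses $d(x,y)^{\alpha(x)}$ rather than a symmetric expression: whenever the approximating ball is centered at $y$, all Hölder comparisons must be anchored at $x$ through $d(z,x)\le d(z,y)+\rho$, so that the exponent $\alpha(x)$ (and not $\alpha(y)$) ends up on the right-hand side. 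With this bookkeeping in place, the key hypothesis $\alpha\le\delta$ is used precisely at the moment one trades $s_0^{\alpha(x)-\delta}$ for $\rho^{\alpha(x)-\delta}$, after which letting $\varepsilon\to 0$ finishes the proof.
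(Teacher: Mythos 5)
Your proposal is correct, and while it follows the same outer skeleton as the paper (sup-norm bound and continuity of $Mf$ from G\'orka's theorem, then a two-case analysis comparing a near-optimal radius $s_0$ with $d(x,y)$), it handles the crux --- the large-radius case --- by a genuinely different and arguably cleaner device. The paper bounds the difference of averages by (measure of an annulus)/(measure of a ball) times $\|g\|_{L^\infty}$, which is only useful when $\|g\|_{\infty}\lesssim r^{\alpha(x)}$; to arrange this it truncates, introducing $f_1=f-\sup f$ and $f_2=f-\inf f$ on $B(x,r+2a)$, the auxiliary classes $S$ and $F$, and a three-way case analysis on the sign of $f$ to get around the fact that subtracting constants does not commute with taking $|\cdot|$. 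Your exact identity
\[
\vint_A|f|\,d\mu-\vint_B|f|\,d\mu=\frac{\mu(B\setminus A)}{\mu(B)}\left(\vint_A|f|\,d\mu-\vint_{B\setminus A}|f|\,d\mu\right),\qquad A\subset B,
\]
replaces $\|f\|_\infty$ by a difference of averages that is automatically controlled by the oscillation of $|f|$ over $B$, hence by $4[f]s_0^{\alpha(x)}$, with no truncation and no sign analysis; the hypothesis $\alpha\le\delta$ then enters exactly once, in trading $s_0^{\alpha(x)-\delta}$ for $\rho^{\alpha(x)-\delta}$. Two further differences: you prove the two one-sided inequalities directly with the exponent anchored at $x$ (handling the asymmetry of the seminorm by routing all H\"older comparisons through $x$ via $d(z,x)\le d(z,y)+\rho$), whereas the paper proves a single inequality against $\min\{a^{\alpha(x)},a^{\alpha(y)}\}$ and splits into two cases according to which value is attained; and your seminorm estimate for $Mf$ uses only the seminorm $[f]$, never $\|f\|_{C(X)}$, so you need neither the normalization $\|f\|_{C^{0,\alpha(\cdot)}(X)}=1$ nor the paper's preliminary reduction to $d(x,y)\le1$. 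The individual steps check out (the inclusion $B(y,s_0+\rho)\setminus B(x,s_0)\subset B(y,s_0+\rho)\setminus B(y,s_0-\rho)$ with admissible $\epsilon=2\rho/(s_0+\rho)<1$, and the analogous inclusion $B(y,s_0-2\rho)\subset B(x,s_0-\rho)$ in the second case), so the argument is complete.
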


\begin{proof}
Let us fix $f \in C^{0,\alpha(\cdot)}(X)$ such, that $ \| f \|_{C^{0,\alpha(\cdot)}(X)}  = 1$. By Theorem A from \cite{Gorka} we have that $ Mf \in C(X)$ and the following inequality holds
 \[
  \|Mf\|_{C(X)}\leq \|f\|_{C(X)}.
 \]
Therefore, in order to prove Theorem \ref{t1}, we need to show that for every $x, y \in X$ such that $x\neq y$ 
\begin{equation} \label{sn1}
\frac{|Mf(x)-Mf(y)|}{d^{\alpha(x)}(x,y)}   \leq C_1,  
\end{equation}
where $C_1=\max \left\{7, 1+12K2^{\delta}\right\}$.   
 
Let us fix two distinct points $x,y \in X$ and define $a = d(x,y)$. If $a>1$, then
\[
|Mf(x)- Mf(y)| \leq 2  \| f \|_{C(X)} \leq  2 \| f \|_{C(X)}a^{\alpha(x)},
\]
and \eqref{sn1} holds. Therefore, we can assume that $0<a \leq 1$. 
Let us observe that \eqref{sn1} follows from the following inequality
\begin{equation} \label{eq:basic}
Mf(y) \geq Mf(x) - C_1 \min\left\{ a^{\alpha(x)}, a^{\alpha(y)} \right\}.
\end{equation}
Indeed, if \eqref{eq:basic} holds, then
\[
Mf(x) - Mf(y) \leq C_1 \min\left\{ a^{\alpha(x)}, a^{\alpha(y)} \right\},
\]
and 
\[
Mf(y) - Mf(x) \leq C_1 \min\left\{ a^{\alpha(x)}, a^{\alpha(y)} \right\}.
\]
Therefore, gathering the above inequalities we get \eqref{sn1}. Hence, in order to finish the proof we need to show (\ref{eq:basic}). We shall give the proof of (\ref{eq:basic}) in two cases: $\min\left\{ a^{\alpha(x)}, a^{\alpha(y)} \right\}=a^{\alpha(x)}$, $\min\left\{ a^{\alpha(x)}, a^{\alpha(y)} \right\}=a^{\alpha(y)}$. \\

{\bf Case 1}  $\min\left\{ a^{\alpha(x)}, a^{\alpha(y)} \right\}=a^{\alpha(x)}$. \\
By the very definition of $Mf(x)$, we 
choose $r>0$ such that
\begin{equation} \label{eq:r}
Mf(x) \leq  \vint_{B(x,r)} |f| \, d \mu  + a^{\alpha(x)}.
\end{equation}
We shall consider two subcases.

\emph{\bf Subcase 1.1} $r \leq a$.\\ 
For $z \in B(x,r) \cup B(y,r)$ we have
\begin{eqnarray}\label{est}
|f(x) - f(z)| \leq 2 a^{\alpha(x)}. 
\end{eqnarray}
Indeed, if $z\in B(x,r)$, then
\[
|f(x) - f(z)| \leq d^{\alpha(x)}(x,z) \leq a^{\alpha(x)}\leq 2 a^{\alpha(x)},  
\]
and if $z \in B(y,r)$, then
\[
|f(x)-f(z)| \leq d^{\alpha(x)}(x,z)\leq d^{\alpha(x)}(x,y)+d^{\alpha(x)}(y,z)<a^{\alpha(x)}+r^{\alpha(x)}\leq 2 a^{\alpha(x)}.
\]
Thus, from inequality (\ref{est}) we obtain
\begin{align*}
\vint_{B(x,r)} |f|  \, d \mu -\vint_{B(y,r)} |f| \, d \mu & = \vint_{B(x,r)} \left( |f| - |f(x)| \right)  \, d \mu -\vint_{B(y,r)} \left( |f| -| f(x)| \right)  \, d \mu \\ 
& \leq \vint_{B(x,r)} \left| |f| - |f(x)| \right|  \, d \mu +\vint_{B(y,r)} \left| |f| -| f(x)| \right|  \, d \mu \\
& \leq  \vint_{B(x,r)} \left| f - f(x) \right|  \, d \mu +\vint_{B(y,r)} \left| f - f(x) \right|  \, d \mu \leq 4   a^{\alpha(x)}.
\end{align*}
Next, combining the above inequality with \eqref{eq:r} we get
\[
Mf(y) \geq  \vint_{B(y,r)} |f| \, d \mu \geq \vint_{B(x,r)} |f| \, d \mu - 4  a^{\alpha(x)}  \geq  Mf(x) -  5  a^{\alpha(x)} .
\]
Hence, inequality (\ref{eq:basic}) follows.

\emph{\bf Subcase 1.2} $r > a$.\\
 Let us introduce two sets
\begin{eqnarray*}
S := \left\{g \in L^1(B(x,r+2a)) \, \middle| \,  \vint_{B(x,r)} |g|  \, d \mu -\vint_{B(y,r+a)} |g| \, d \mu \leq 6K2^{\delta}  a^{\alpha(x)}   \right\},
\end{eqnarray*}
and
\begin{eqnarray*}
F:= \left\{g \in L^1(B(x,r+2a)) \, \middle| \,  \|g\|_{L^{\infty}(B(x,r+2a))} \leq A \right\},
\end{eqnarray*}
where $ A=A(x,r) := \min \left\{ 1,2 (3r)^{\alpha(x)}  \right\}$.
 
We shall divide this part of the proof into three steps.

\emph{\bf Step 1.1} If $f \in S$, then (\ref{eq:basic}) holds.\\
Indeed, if $f \in S$, then by the very definition of the maximal function we have
\begin{align*}
Mf(y) &\geq  \vint_{B(y,r + a)} |f| \, d \mu \\
& \geq \vint_{B(x,r)} |f| \, d \mu - 6K2^{\delta}   a^{\alpha(x)} \\
&\geq  Mf(x) -  \left(1 + 6K2^{\delta} \right)   a^{\alpha(x)},
\end{align*}
and \eqref{eq:basic} holds.

\emph{\bf Step 2.1} $F \subset S$.\\
First, let us observe that due to the $\delta$ -annular decay property we have
\begin{eqnarray}\label{dec}
\mu \left( B(x,r+2a) \setminus B(x, r) \right) \leq K \left( \frac{2a}{r+2a} \right)^{\delta} \mu \left( B(x,r+2a) \right) .
\end{eqnarray}
Hence, if $g \in F$ then 
\begin{align*}
\vint_{B(x,r)} |g|  \, d \mu -\vint_{B(y,r+a)} |g| \, d \mu & \leq \frac{1}{\mu(B(x,r))} \int_{B(x,r)} |g|  \, d \mu  -  \frac{1}{\mu(B(y,r+a))}  \int_{B(x,r)} |g| \, d \mu   \\ & \leq \frac{\mu(B(y,r+a)) - \mu(B(x,r))}{\mu(B(x,r))\mu(B(y,r+a))}A \mu(B(x,r))  \\
 & \leq \frac{\mu(B(x,r+2a)) - \mu(B(x,r))}{\mu(B(x,r+2a))}A \leq 2 K \left( \frac{2a}{r+2a} \right)^{\delta} (3r)^{\alpha(x)} \\ 
 & \leq 6 K  \left( \frac{2a}{r+2a} \right)^{\delta} \left(  \frac{r}{a} \right)^{\alpha(x)} a^{\alpha(x)} \leq  6 K  \left( \frac{2a}{r+2a} \right)^{\delta} \left(  \frac{r}{a} \right)^{\delta} a^{\alpha(x)} \\
 & =   6  K \left( \frac{2ar}{ar+2a^2} \right)^{\delta}  a^{\alpha(x)} \leq 6 K  2^{\delta} a^{\alpha(x)}.
\end{align*}
Thus, we get $g \in S$.

\emph{\bf Step 3.1}  $f \in S$.\\
If $\alpha(x)=0$ or $r\geq \frac{1}{3}2^{-1/\alpha(x)}$ with $\alpha(x)>0$, then $A = 1$ and since $\| f \|_{C^{0,\alpha(\cdot)}(X)}  = 1$, we have $f \in F$. Hence, by Step 2.1 we get $f \in S$. 

We are left with the case $\alpha(x)>0$ with $r \in \left( a, \frac{1}{3}2^{-1/\alpha(x)} \right)$. 
Let us introduce two auxiliary functions:
\[
f_1= f - \sup_{z \in B(x,r+2a)} f(z),
\]
\[
f_2 = f - \inf_{z \in B(x,r+2a)} f(z).
\]
We claim that $f_1, f_2 \in S$. Indeed, for any $\epsilon > 0$ we find $z_1, z_2 \in B(x,r+2a)$ such, that $f_1(z_1) \geq - \epsilon$ and $f_2(z_2) \leq \epsilon$. Then, for  $z \in B(x,r+2a)$
\begin{align*}
|f_1(z)| &\leq |f_1(z)-f_1(x)|+ |f_1(x)-f_1(z_1)|  + |f_1(z_1)|\\
& \leq  d(z,x)^{\alpha(x)} + d(z_1,x)^{\alpha(x)} + \epsilon \\ 
& \leq 2 \left(  r+ 2a  \right)^{\alpha(x)} + \epsilon  \\
&\leq  2\left(  3 r  \right)^{\alpha(x)}  + \epsilon, 
\end{align*}
and in the same fashion we have 
\begin{align*}
|f_2(z)| &\leq |f_2(z)-f_2(x)|+ |f_2(x)-f_2(z_2)|  + |f_2(z_2)| \leq  d(z,x)^{\alpha(x)} + d(z_2,x)^{\alpha(x)} + \epsilon \\ 
& \leq 2 \left(  r+ 2a  \right)^{\alpha(x)} + \epsilon  \leq  2\left(  3 r  \right)^{\alpha(x)}  + \epsilon.
\end{align*}
Hence,
\begin{equation*}
\left \| f_1  \right\|_{ L^{\infty}(B(x,r+2a)) } \leq 2 \left(  3 r  \right)^{\alpha(x)} + \epsilon,\\
\left \| f_2  \right\|_{ L^{\infty}(B(x,r+2a)) } \leq 2 \left(  3 r  \right)^{\alpha(x)} + \epsilon.
\end{equation*}
Thus, we pass to the limit $\epsilon \rightarrow 0^+$ and since $A(x,r)=2 \left(  3 r  \right)^{\alpha(x)}$ we get $f_1, f_2 \in F \subset S$.

Let us observe that for $z \in B(x,r+2a)$ 
\[
-A \leq f_1(z) \leq 0 \quad \text{and} \quad 0 \leq f_2(z) \leq A.
\]

Therefore, if $f(z_0) \geq A$ for some $z_0 \in B(x,r+2a)$, then for any $z \in B(x,r+2a)$ 
\[
f(z) = f(z_0) + f(z) - f(z_0) \geq A + f_1(z) \geq A-A = 0.
\]
Hence, since $f_2 \in S$, we get
\begin{align*}
\vint_{B(x,r)} |f|  \, d \mu -\vint_{B(y,r+a)} |f| \, d \mu &= \vint_{B(x,r)} f  \, d \mu -\vint_{B(y,r+a)} f \, d \mu \\
&= \vint_{B(x,r)} f_2  \, d \mu -\vint_{B(y,r+a)} f_2 \, d \mu \\
& = \vint_{B(x,r)} |f_2|  \, d \mu -\vint_{B(y,r+a)} |f_2| \, d \mu \\
&\leq 6K2^{\delta}  a^{\alpha(x)},
\end{align*}
and we have $f \in S$.

Similarly, if $f(z_0) \leq -A$ for some $z_0 \in B(x,r+2a)$, then for any $z \in B(x,r+2a)$ 
\[
f(z)\leq 0,
\]
and we obtain
\begin{align*}
\vint_{B(x,r)} |f|  \, d \mu -\vint_{B(y,r+a)} |f| \, d \mu &= -\vint_{B(x,r)} f  \, d \mu +\vint_{B(y,r+a)} f \, d \mu \\
& \leq 6K2^{\delta}  a^{\alpha(x)}.
\end{align*}
Finally, if for every $z \in B(x,r+2a)$ we have $-A \leq f(z) \leq A$, then $f \in F$ and therefore $f \in S$.\\

{\bf Case 2} $\min\left\{ a^{\alpha(x)}, a^{\alpha(y)} \right\}=a^{\alpha(y)}$. \\
The structure of the proof, in this case, is similar to Case 1. Nevertheless, for the convenience of the reader and clarity of the proof, we give the full argumentation. 
Let us fix $r>0$ such that
\begin{equation} \label{eq:r1}
Mf(x) \leq  \vint_{B(x,r)} |f| \, d \mu  + a^{\alpha(y)}.
\end{equation}

\emph{\bf Subcase 2.1} $r \leq a$.\\ 
For $z \in B(x,r) \cup B(y,r)$ we have
\begin{eqnarray}\label{est1}
|f(x) - f(z)| \leq 3 a^{\alpha(y)}. 
\end{eqnarray}
Indeed, if $z\in B(x,r)$, then
\begin{align*}
|f(x) - f(z)| &\leq |f(x)-f(y)|+|f(y)-f(z)|\leq d^{\alpha(y)}(x,y)+d^{\alpha(y)}(y,z)\\
&\leq 2 d^{\alpha(y)}(x,y)+d^{\alpha(y)}(x,z)< 2a^{\alpha(y)}+r^{\alpha(y)}\leq 3a^{\alpha(y)},  
\end{align*}
and if $z \in B(y,r)$, then
\[
|f(x)-f(z)|\leq |f(x)-f(y)|+|f(y)-f(z)| \leq d^{\alpha(y)}(x,y)+d^{\alpha(y)}(y,z)<2a^{\alpha(y)}.
\]
Next, by inequality (\ref{est1}) we get
\begin{align*}
\vint_{B(x,r)} |f|  \, d \mu -\vint_{B(y,r)} |f| \, d \mu &  \leq  6 a^{\alpha(y)},
\end{align*}
and gathering the above inequality with \eqref{eq:r1} we have
\[
Mf(y) \geq  \vint_{B(y,r)} |f| \, d \mu \geq \vint_{B(x,r)} |f| \, d \mu - 6   a^{\alpha(y)}  \geq  Mf(x) -  7  a^{\alpha(y)}.
\]
Therefore, inequality (\ref{eq:basic}) follows.

\emph{\bf Subcase 2.2} $r > a$.\\
We introduce two sets
\begin{eqnarray*}
\tilde{S} := \left\{g \in L^1(B(x,r+2a)) \, \middle| \,  \vint_{B(x,r)} |g|  \, d \mu -\vint_{B(y,r+a)} |g| \, d \mu \leq 12K2^{\delta}  a^{\alpha(y)}   \right\},
\end{eqnarray*}
and
\begin{eqnarray*}
\tilde{F}:= \left\{g \in L^1(B(x,r+2a)) \, \middle| \,  \|g\|_{L^{\infty}(B(x,r+2a))} \leq A \right\},
\end{eqnarray*}
where $ A=A(y,r) := \min \left\{ 1,4 (3r)^{\alpha(y)}  \right\}$.

\emph{\bf Step 1.2} If $f \in \tilde{S}$, then (\ref{eq:basic}) holds.\\
Indeed, if $f \in \tilde{S}$, then 
\begin{align*}
Mf(y) &\geq   \vint_{B(x,r)} |f| \, d \mu - 12K2^{\delta}   a^{\alpha(y)} \\
&\geq  Mf(x) -  \left(1 + 12K2^{\delta} \right)   a^{\alpha(y)}.
\end{align*}

\emph{\bf Step 2.2} $\tilde{F} \subset \tilde{S}$.\\
Let $g \in \tilde{F}$ then, inequality (\ref{dec}) yields
\begin{align*}
\vint_{B(x,r)} |g|  \, d \mu -\vint_{B(y,r+a)} |g| \, d \mu  & \leq 4 K \left( \frac{2a}{r+2a} \right)^{\delta} (3r)^{\alpha(y)} \\ 
 & \leq  12 K  \left( \frac{2a}{r+2a} \right)^{\delta} \left(  \frac{r}{a} \right)^{\delta} a^{\alpha(y)} \\
 & \leq 12 K  2^{\delta} a^{\alpha(y)}.
\end{align*}

\emph{\bf Step 3.2}  $f \in \tilde{S}$.\\
If $\alpha(y)=0$ or $r\geq \frac{1}{3}4^{-1/\alpha(y)}$ with $\alpha(y)>0$, then $f \in \tilde{F}$ and by Step 2.2 we get $f \in \tilde{S}$.

Hence, we need to consider the case $\alpha(y)>0$ with $r \in \left( a, \frac{1}{3}4^{-1/\alpha(y)} \right)$. 
We shall use auxiliary functions $f_1, f_2$ defined in Step 3.1. Let us observe that $f_1, f_2 \in \tilde{S}$. For a fixed $\epsilon > 0$ we choose $z_1, z_2 \in B(x,r+2a))$ such, that $f_1(z_1) \geq - \epsilon$ and $f_2(z_2) \leq \epsilon$. Thus, for  $z \in B(x,r+2a)$
\begin{align*}
|f_1(z)| &\leq |f_1(z)-f_1(y)|+ |f_1(y)-f_1(z_1)|  + |f_1(z_1)|\\
& \leq  d(z,y)^{\alpha(y)} + d(z_1,y)^{\alpha(y)} + \epsilon \leq 2 d^{\alpha(y)}(x,y)+d^{\alpha(y)}(x,z)+d^{\alpha(y)}(x,z_1)+\epsilon\\ 
& \leq 2 \left(  r+ 2a  \right)^{\alpha(y)}+ 2a^{\alpha(y)}+ \epsilon  
\leq  4\left(  3 r  \right)^{\alpha(y)}  + \epsilon, 
\end{align*}
and 
\begin{align*}
|f_2(z)| &\leq |f_2(z)-f_2(y)|+ |f_2(y)-f_2(z_2)|  + |f_2(z_2)| \leq  d(z,y)^{\alpha(y)} + d(z_2,y)^{\alpha(y)} + \epsilon \\ 
&  \leq  4\left(  3 r  \right)^{\alpha(y)}  + \epsilon.
\end{align*}
Therefore,
\begin{equation*}
\left \| f_1  \right\|_{ L^{\infty}(B(x,r+2a)) } \leq 4 \left(  3 r  \right)^{\alpha(y)},\\
\left \| f_2  \right\|_{ L^{\infty}(B(x,r+2a)) } \leq 4 \left(  3 r  \right)^{\alpha(y)},
\end{equation*}
and since $A(y,r)=4 \left(  3 r  \right)^{\alpha(y)}$, we get $f_1, f_2 \in \tilde{F} \subset \tilde{S}$.

Finally, since $f_1, f_2 \in  \tilde{S}$ and $-A \leq f_1(z) \leq 0  \leq f_2(z) \leq A$ for $z \in B(x,r+2a)$, we get that  $f \in \tilde{S}$. This finishes the proof of the theorem.
\end{proof}

\section{Continuity of the maximal operator}
\begin{tw}
Let $\delta \in (0, 1]$ and $(X, d, \mu)$ satisfies the $\delta$-annular property. If $\alpha : X \rightarrow (0,1]$ and $\beta : X \rightarrow [0,1]$ satisfy $\sup_{x \in X} \beta(x)/\alpha(x)<1$, then the operator
$$
M:C^{0,\alpha(\cdot)}(X)\rightarrow C^{0,\beta(\cdot)}(X)
$$
is continuous.
\end{tw}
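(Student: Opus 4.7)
The plan is to combine the boundedness of $M$ from Theorem \ref{t1} with a short interpolation between the $L^\infty$ bound and the $\alpha$-H\"older bound on the difference $h_n := Mf_n - Mf$. Throughout, assume $f_n \to f$ in $C^{0,\alpha(\cdot)}(X)$ and set $q := \sup_{x \in X} \beta(x)/\alpha(x) \in [0,1)$.

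First, the sublinearity $|Mf_n(x) - Mf(x)| \leq M(|f_n-f|)(x) \leq \|f_n-f\|_{C(X)}$, which is immediate from the very definition of the maximal function, gives
\[
A_n := \|h_n\|_{C(X)} \leq \|f_n-f\|_{C(X)} \leq \|f_n-f\|_{C^{0,\alpha(\cdot)}(X)} \longrightarrow 0.
\]
Next, Theorem \ref{t1} applied to $f_n$ and to $f$ separately, together with the triangle inequality, yields $h_n \in C^{0,\alpha(\cdot)}(X)$ and the uniform H\"older bound
\[
B := \sup_{n}\, \sup_{x \neq y}\frac{|h_n(x) - h_n(y)|}{d(x,y)^{\alpha(x)}} \leq C_1 \sup_n \bigl( \|f_n\|_{C^{0,\alpha(\cdot)}(X)} + \|f\|_{C^{0,\alpha(\cdot)}(X)} \bigr) < \infty,
\]
since a convergent sequence in $C^{0,\alpha(\cdot)}(X)$ is norm-bounded.

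The remainder is a pointwise interpolation. Fix $x \neq y$. Combining $|h_n(x) - h_n(y)| \leq 2 A_n$ with $|h_n(x) - h_n(y)| \leq B \, d(x,y)^{\alpha(x)}$ and using the elementary inequality $\min(u,v) \leq u^{1-q} v^{q}$ valid for $u,v \geq 0$, one obtains
\[
|h_n(x) - h_n(y)| \leq (2 A_n)^{1-q} B^q \, d(x,y)^{q\alpha(x)}.
\]
Since $q\alpha(x) \geq \beta(x)$, for $d(x,y) \leq 1$ this gives $|h_n(x) - h_n(y)|/d(x,y)^{\beta(x)} \leq (2 A_n)^{1-q} B^q$, while for $d(x,y) > 1$ the plain $L^\infty$ estimate already yields $|h_n(x) - h_n(y)|/d(x,y)^{\beta(x)} \leq 2 A_n$ (using $\beta \geq 0$). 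Both quantities tend to $0$ as $n \to \infty$, so $\|h_n\|_{C^{0,\beta(\cdot)}(X)} \to 0$, which is exactly the desired continuity.

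The heavy lifting sits in Theorem \ref{t1}; the remaining work uses the gap $1 - q > 0$ in an essential way to convert $A_n \to 0$ into $(2A_n)^{1-q} \to 0$, so the hypothesis $\sup \beta/\alpha < 1$ cannot be weakened to $\leq 1$ here. The only place that requires a touch of care is that the interpolated factor $d(x,y)^{q\alpha(x)-\beta(x)}$ can blow up for $d(x,y)>1$, but in that regime the $L^\infty$ estimate alone handles the quotient; no serious obstacle is expected beyond this routine case split.
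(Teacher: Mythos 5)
Your proof is correct and follows essentially the same route as the paper: both deduce $\|Mf_n-Mf\|_{C(X)}\to 0$ from sublinearity, extract a uniform $\alpha(\cdot)$-H\"older bound on $Mf_n-Mf$ from Theorem \ref{t1}, and interpolate between the two using the gap $\sup\beta/\alpha<1$. The only (cosmetic) difference is that you fix the exponent $q=\sup\beta/\alpha$ from the start via $\min(u,v)\le u^{1-q}v^{q}$ and split on $d(x,y)\lessgtr 1$, whereas the paper writes the exact pointwise interpolation identity with exponent $\beta(x)/\alpha(x)$ and then passes to the supremum exponent after normalizing so that one factor is $\ge 1$ and the other $\le 1$.
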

\begin{proof}
Let us note since $\beta(\cdot) \leq \alpha(\cdot)$, we have $Id : C^{0,\alpha(\cdot)}(X)\rightarrow C^{0,\beta(\cdot)}(X)$. Therefore, due to Theorem \ref{t1} we get $M:C^{0,\alpha(\cdot)}(X)\rightarrow C^{0,\beta(\cdot)}(X)$ is bounded. 

In order to prove the continuity of $M$ we fix $f\in C^{0,\alpha(\cdot)}(X)$ and sequence $\{f_n\}\subset C^{0,\alpha(\cdot)}(X)$ such that $f_n\rightarrow f$ in $C^{0,\alpha(\cdot)}(X)$. It is easy to see that $Mf_n\to Mf$ in $C(X)$. Thus, it is left to show that 
$$
\sup_{x\neq y}\frac{|Mf_n(x)-Mf(x)-Mf_n(y)+Mf(y)|}{d(x,y)^{\beta(x)}}\rightarrow 0.
$$
By Theorem \ref{t1} we know that sequence $\{Mf_n\}$ is bounded in $C^{0,\alpha(\cdot)}(X)$, so we can assume that there exists $N\geq1$ such that $\|Mf_n\|_{C^{0,\alpha(\cdot)}(X)}\leq N$ for all $n$ and $\|Mf\|_{C^{0,\alpha(\cdot)}(X)}\leq N$. Furthermore, we can assume that $\|f_n-f\|_{C^{0,\alpha(\cdot)}(X}<1/2$ for large $n$.

Therefore, for $x, y \in X$ such that $x \neq y$, we get the following string of inequalities
\begin{align*}
&\frac{|Mf_n(x)-Mf(x)-Mf_n(y)+Mf(y)|}{d(x,y)^{\beta(x)}}\\
&=\left(\frac{|Mf_n(x)-Mf(x)-Mf_n(y)+Mf(y)|}{d(x,y)^{\alpha(x)}}\right)^{\frac{\beta(x)}{\alpha(x)}}|Mf_n(x)-Mf(x)-Mf_n(y)+Mf(y)|^{1-\frac{\beta(x)}{\alpha(x)}}\\
&\leq \left(\frac{|Mf_n(x)-Mf_n(y)|}{d(x,y)^{\alpha(x)}}+\frac{|Mf(x)-Mf(y)|}{d(x,y)^{\alpha(x)}}\right)^{\frac{\beta(x)}{\alpha(x)}}\left(2\|Mf_n-Mf\|_{C(X)}\right)^{1-\frac{\beta(x)}{\alpha(x)}}\\
&\leq \left(2N\right)^{\left(\frac{\beta}{\alpha}\right)^+}\left(2\|Mf_n-Mf\|_{C(X)}\right)^{1-\left(\frac{\beta}{\alpha}\right)^+},
\end{align*}
where $\left(\frac{\beta}{\alpha}\right)^+=\sup_{x \in X} \frac{\beta(x)}{\alpha(x)}$. 
Hence,
$$
\sup_{x\neq y}\frac{|Mf_n(x)-Mf(x)-Mf_n(y)+Mf(y)|}{d(x,y)^{\beta(x)}}\leq \left(2N\right)^{\left(\frac{\beta}{\alpha}\right)^+}\left(2\|Mf_n-Mf\|_{C(X)}\right)^{1-\left(\frac{\beta}{\alpha}\right)^+}.
$$
Since the right-hand side of the above inequality goes to $0$ when $n\rightarrow\infty$, the proof follows.
\end{proof}
\begin{tw}\label{prz}
There exist $f, f_n \in C^{0,1}(\r)$ such that $f_n\rightarrow f$ in $C^{0,1}(\r)$ and $Mf_n\not\rightarrow Mf$ in $C^{0,1}(\r)$.
\end{tw}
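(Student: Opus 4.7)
My plan is to exhibit an explicit counterexample on $\mathbb{R}$. The key mechanism is that $Mf(x) = \sup_{r>0} \vint_{B(x,r)} |f|\, d\mu$ is defined by a supremum, and a small $C^{0,1}$-perturbation of $f$ can shift which radius $r$ (effectively) realises this supremum. Where two distinct radii tie for the supremum, $Mf$ typically exhibits a corner; small perturbations move the corner by a distance $\delta_n \to 0$, and on the short interval between the old and new corner positions $Mf_n - Mf$ is essentially linear with a slope equal to the jump of $(Mf)'$ across the corner. That slope is a fixed positive constant, so $[Mf_n - Mf]_{\mathrm{Lip}}$ stays bounded away from $0$ even though $\|f_n-f\|_{C^{0,1}(\mathbb{R})} \to 0$.

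Concretely I would first fix a non-negative, compactly supported, piecewise-linear Lipschitz function $f$ whose maximal function has such a corner at some point $x_0$. A natural choice is a two-hump configuration (for example, one tall narrow tent near $x_0$ together with a second shorter and wider bump supported in some far-away interval $I$), arranged so that in a neighbourhood of $x_0$, on one side of $x_0$ the sup in $Mf(x)$ is realised by small balls around $x$ (yielding an expression close to $f(x)$), while on the other side it is realised by a specific large ball $B(x, r(x))$ that is forced to contain the second bump. These two competing expressions are Lipschitz functions of $x$ with different slopes that coincide at $x_0$. Then I would define $f_n := f + \varepsilon_n \psi$ where $\varepsilon_n \to 0^+$ and $\psi$ is a fixed $C^{0,1}(\mathbb{R})$ function whose support lies inside the far bump (so that the "small-ball" side of the corner is untouched while the "large-ball" side is perturbed). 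This ensures $\|f_n-f\|_{C^{0,1}(\mathbb{R})} = \varepsilon_n \|\psi\|_{C^{0,1}(\mathbb{R})} \to 0$.

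The decisive step is to track the corner of $Mf_n$: near $x_0$ the sup in $Mf_n(x)$ is still realised by (essentially) the same two competing balls, with the large-ball expression shifted by $\Theta(\varepsilon_n)$ and the small-ball expression essentially unchanged, so the new corner $x_n$ satisfies $|x_n-x_0| = \Theta(\varepsilon_n)$ with a non-degenerate constant. On the interval between $x_0$ and $x_n$, $Mf_n - Mf$ is (affine up to lower order) with slope equal to the original slope jump of $Mf$ across its corner; since this slope is bounded below by a fixed positive constant independent of $n$, we obtain
\[
\sup_{x \neq y} \frac{|(Mf_n-Mf)(x) - (Mf_n-Mf)(y)|}{|x-y|} \not\to 0.
\]

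The main obstacle is the explicit verification, which is a careful but elementary piecewise computation. One must check two things: first, that near $x_0$ the supremum defining $Mf$ (and $Mf_n$) is genuinely realised by precisely the two competing radii identified above, so that no third radius spoils the picture; and second, that the corner displacement $\delta_n = x_n - x_0$ is linear in $\varepsilon_n$ with a non-degenerate constant, which requires comparing the perturbed average over the large ball against the unperturbed small-ball value with enough precision. Once those two points are in place, the Lipschitz-seminorm lower bound, and hence discontinuity of $M$ on $C^{0,1}(\mathbb{R})$, follows.
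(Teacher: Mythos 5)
Your mechanism is exactly the one the paper exploits: $Mf$ has a corner at a point $x_0$ where the optimal radius jumps from ``small'' to ``large'', a small perturbation of $f$ moves that corner to $x_n\neq x_0$, and on the interval between $x_0$ and $x_n$ the difference $Mf_n-Mf$ has slope comparable to the slope jump of $Mf$ at the corner, so the Lipschitz seminorm of $Mf_n-Mf$ does not tend to $0$. (The paper realises this with the $2$-periodic extension of $|x|$ and $f_n=f-\tfrac1n$; the corner sits at $x_0=\tfrac12$, where the left branch $2-\sqrt{x^2+2}$, coming from a large ball, meets the right branch $x$, coming from $r\to0$, and the test points are $\tfrac12$ and $d_n=\tfrac12-\tfrac1{4n^2}$.) So the idea is sound and is not a different route in any essential sense.

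However, what you have written is a plan, not a proof, and the part you defer is precisely the part that constitutes the proof of this existence statement. You never fix a concrete $f$, $\psi$, or $\varepsilon_n$, and the two verifications you yourself flag as ``the main obstacle'' are genuinely nontrivial: (a) you must show that near $x_0$ the supremum defining $Mf$ (and $Mf_n$) is attained only on the two competing branches --- for a two-hump configuration there is a continuum of intermediate radii (e.g.\ balls containing all of the tall tent but not the far bump) whose averages must be shown to be strictly dominated, which requires computing the averages as functions of $r$ over the whole relevant range, exactly the piecewise computation the paper carries out (after first reducing the range of $r$ to a compact interval via a periodicity lemma); and (b) you must show the corner actually moves, in a controlled direction, and that on the whole interval between the two corners $Mf_n$ really follows the shifted large-ball branch while $Mf$ follows the small-ball branch --- a statement about $Mf_n$ at \emph{every} point of that interval, not just at $x_0$ and $x_n$. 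Until a specific $f$ is exhibited and these two points are checked, the theorem is not proved; as it stands the argument establishes only that a counterexample of this type is plausible.
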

\begin{proof}
First of all we shall prove the following lemma.
\begin{lem} \label{per}
Let $T >0$ and let $f \in C(\r,\r)$ be a $T-$periodic function. Then, for any $x\in\r$ there exists $r\in [0,T]$ such that 
$$
Mf(x)=\vint_{x-r}^{x+r}|f(t)|\, dt.\footnote{We define $\vint_{x-r}^{x+r}|f(t)|\, dt$ as $|f(x)|$ for $r=0$.}
$$
\end{lem}
\begin{proof}
Let us observe that for a fixed $x \in \r$, the map $f_x: \r \rightarrow \r$ defined as $f_x (t)=f(x+t)$ is $T-$periodic and $Mf_x(0)=Mf(x)$. Therefore, it is enough to prove the lemma for $x=0$. Let us denote 
$$a=\max_{0\leq r\leq T}\vint_{-r}^r|f(t)|\,dt.
$$
We shall show 
$$
\vint_{-r}^r|f(t)|\,dt\leq a
$$
for all $r\geq 0$. The inequality is obvious for $0\leq r\leq T$. We shall prove it for $r>T$. We can write $r$ as $nT+b$, where $n\in\n$ and $0\leq b<T$. Hence,
\begin{align*}
\vint_{-r}^r|f(t)|\,dt&=\frac{\int_{-b}^b|f(t)|\,dt+\sum_{k=1}^n\left(\int_{b+(k-1)T}^{b+kT}|f(t)|\, dt+\int^{-b-(k-1)T}_{-b-kT}|f(t)|\, dt\right)}{2nT+2b}
\\&=\frac{\int_{-b}^b|f(t)|\,dt+2n\int_{-T/2}^{T/2}|f(t)|\, dt}{2nT+2b}=\frac{2b\vint_{-b}^b|f(t)|\,dt+2nT\vint_{-T/2}^{T/2}|f(t)|\, dt}{2nT+2b}\\
&\leq\frac{2ba+2nTa}{2nT+2b}=a,
\end{align*}
and the proof is complete.
\end{proof} 
Now, we are in position to prove Theorem \ref{prz}. Let $f:\r\rightarrow\r$ be a continuous and $2-$periodic function such that $f(x)=|x|$ for $x\in [-1,1]$. Next, we define a sequence $f_n(x)=f(x)-\frac{1}{n}$ for $x\in\r$. It is easy to see that $f_n\rightarrow f$ in $C^{0,1}(\r)$. 

We shall show $Mf_n\not\rightarrow Mf$ in $C^{0,1}(\r)$. It is obvious that $Mf$ is $2-$periodic and even function. Thus, it is enough to recognize $Mf$ on $[0,1]$.
By the straightforward integration, for $x\in [0,\frac 1 2]$ we have 
$$
\vint_{x-r}^{x+r}|f(t)|\,dt=\begin{cases}
x,&\textrm{for }r\in [0,x]\\
\frac{x^2+r^2}{2r},&\textrm{for }r\in [x,1-x]\\
1-x+\frac{2x-1}{2r},&\textrm{for }r\in[1-x,1+x]\\
2-\frac{r}{2}-\frac{x^2+2}{2r},&\textrm{for }r\in [1+x,2-x]\\
\frac{(r-2) x}{r}+\frac{1}{r},&\textrm{for }r\in[2-x,2],
\end{cases}
$$
and for $x\in [\frac{1}{2},1]$ we get 
$$
\vint_{x-r}^{x+r}|f(t)|\,dt =\begin{cases}
x,&\textrm{for }r\in[0,1-x]\\
\frac{-x^2+2 x-1}{2 r}-\frac{r}{2}+1,&\textrm{for }r\in[1-x,x]\\
\frac{x}{r}-\frac{1}{2 r}-x+1,&\textrm{for }r\in[x,2-x]\\
\frac{x^2-2 x+3}{2 r}+\frac{r}{2}-1,&\textrm{for }r\in [2-x,1+x]\\
\frac{1-2 x}{r}+x,&\textrm{for }r\in [1+x,2].
\end{cases}
$$
Therefore, having in mind Lemma \ref{per}, we get
$$
Mf(x)=\begin{cases}
 2-\sqrt{x^2+2}, & \textrm{ for }0\leq x\leq \frac{1}{2} \\
 x, & \textrm{ for }\frac{1}{2}<x\leq 1.
 \end{cases}
$$
Next, for large $n$, we have
$$
\vint_{\frac 1 2-r}^{\frac 1 2+r}|f_n(t)|\,dt=\begin{cases}
\frac 1 2-\frac{1}{n},&\textrm{for }r\in[0,\frac 1 2-\frac{1}{n}]\\
\frac{(n-2)^2}{8 n^2 r}+\frac{r}{2},&\textrm{for }r\in[\frac 1 2-\frac{1}{n},\frac 1 2]\\
\frac{-n^2-4 n+4}{8 n^2 r}-\frac{r}{2}+1,&\textrm{for }r\in[\frac 1 2,\frac 1 2+\frac 1 n]\\
\frac{1}{n^2 r}+\frac{n-2}{2 n},&\textrm{for }r\in[\frac 1 2+\frac 1 n,\frac32-\frac1n]\\
\frac{3 \left(3 n^2-4 n+4\right)}{8 n^2 r}+\frac{r}{2}-1,&\textrm{for }r\in[\frac32-\frac1n,\frac32]\\
-\frac{3 \left(3 n^2+4 n-4\right)}{8 n^2 r}-\frac{r}{2}+2,&\textrm{for }r\in[\frac32,\frac32+\frac1n]\\
\frac{2}{n^2 r}+\frac{n-2}{2 n},&\textrm{for }r\in[\frac32+\frac1n,2].
\end{cases}
$$
Thus, by Lemma \ref{per}, we get
$$
Mf_n(1/2)=1-\sqrt{-\frac{1}{n^2}+\frac{1}{n}+\frac{1}{4}}.
$$
Furthermore, let us define $d_n=\frac{1}{2}-\frac{1}{4 n^2}$. Again, basic integration gives us
$$
\vint_{d_n-r}^{d_n+r}|f_n(t)|\,dt=\begin{cases}
d_n-\frac 1 n,&\textrm{for }r\in[0,d_n-\frac{1}{n}]\\
\frac{4 n^4-16 n^3+12 n^2+8 n+1}{32 n^4 r}+\frac{r}{2},&\textrm{for }r\in [d_n-\frac 1 n,d_n]\\
\frac{2 n^2-1}{4 n^2}+\frac{-2 n^2+2 n+1}{4 n^3 r},&\textrm{for }r\in [d_n,1-d_n]\\
\frac{-4 n^4-16 n^3+12 n^2+8 n-1}{32 n^4 r}-\frac{r}{2}+1,&\textrm{for } r\in[1-d_n,d_n+\frac1n]\\
\frac{3}{4 n^2 r}+\frac{2 n^2-4 n+1}{4 n^2},&\textrm{for }r\in [d_n+\frac{1}{n},2-\frac{1}{n}-d_n]\\
\frac{36 n^4-48 n^3+52 n^2-8 n+1}{32 n^4 r}+\frac{r}{2}-1,&\textrm{for }r\in[2-\frac 1 n-d_n,d_n+1]\\
\frac{2 n^2-1}{4 n^2}+\frac{-6 n^2+8 n-1}{4 n^3 r},&\textrm{for }r\in[d_n+1,2-d_n],\\
\frac{-36 n^4-48 n^3+52 n^2-8 n-1}{32 n^4 r}-\frac{r}{2}+2,&\textrm{for }r\in [2-d_n,2+\frac 1 n-d_n] \\
\frac{5}{2 n^2 r}+\frac{2 n^2-4 n-1}{4 n^2},&\textrm{for }r\in [2+\frac 1 n-d_n,2].
\end{cases}
$$
Hence, by an elementary considerations, for large $n$ we get
$$
\vint_{d_n-r}^{d_n+r}|f_n(y)|\,dy\leq Mf_n(1/2)
$$ 
for $r\in[0,2]$. 
Hence, the above inequality and Lemma \ref{per} yield 
\begin{eqnarray}\label{ine}
Mf_n(d_n)\leq Mf_n(1/2)
\end{eqnarray}
for large $n$.

Finally,  using the expression for $Mf$, we easily get 
$$
\frac{Mf(d_n)-Mf(\frac 1 2)}{\frac{1}{2}-d_n}\rightarrow\frac{1}{3}.
$$
Therefore, gathering (\ref{ine}) with the above convergence, for sufficiently big $n$ we have
\begin{align*}
\|Mf_n -Mf\|_{C^{0,1}(\r)} &\geq \frac{|Mf_n(\frac{1}{2})-Mf(\frac{1}{2})-Mf_n(d_n)+Mf(d_n)|}{|\frac{1}{2}-d_n|}\\
&=\frac{Mf(d_n)-Mf(\frac 1 2)}{\frac{1}{2}-d_n}+\frac{Mf_n(\frac{1}{2})-Mf_n(d_n)}{\frac{1}{2}-d_n}\\
&\geq \frac{Mf(d_n)-Mf(\frac 1 2)}{\frac{1}{2}-d_n}\geq \frac{1}{6}.
\end{align*}
This proves that $Mf_n\not\rightarrow Mf$ in $C^{0,1}(\r)$.
\end{proof}

\subsection*{Acknowledgement} The research of the last author was funded by (POB Cybersecurity and data analysis) of Warsaw University of Technology within the Excellence Initiative: Research University (IDUB) programme.

\bibliographystyle{plain}
\bibliography{HLMaximalFunction}

\smallskip
{\small Piotr Micha{\l} Bies}\\
\small{Department of Mathematics and Information Sciences,}\\
\small{Warsaw University of Technology,}\\
\small{Pl. Politechniki 1, 00-661 Warsaw, Poland} \\
{\tt biesp@mini.pw.edu.pl}\\
\smallskip
{\small Micha{\l} Gaczkowski}\\
\small{Department of Mathematics and Information Sciences,}\\
\small{Warsaw University of Technology,}\\
\small{Pl. Politechniki 1, 00-661 Warsaw, Poland} \\
{\tt M.Gaczkowski@mini.pw.edu.pl}\\
\smallskip
{\small Przemys{\l}aw  G\'orka}\\
\small{Department of Mathematics and Information Sciences,}\\
\small{Warsaw University of Technology,}\\
\small{Pl. Politechniki 1, 00-661 Warsaw, Poland} \\
{\tt  pgorka@mini.pw.edu.pl }\\

\end{document}